\font\bbbld=msbm10 scaled\magstephalf
\newcommand{\bfR}{\hbox{\bbbld R}}
\newcommand{\e}{\varepsilon}
\newcommand{\goto}{\rightarrow}
\newcommand{\ol}{\overline}
\newcommand{\ul}{\underline}
\newcommand{\be}{\begin{equation}}
\newcommand{\ee}{\end{equation}}
\newcommand{\bea}{\begin{eqnarray}}
\newcommand{\eea}{\end{eqnarray}}
\newcommand{\lu}{\underline{u}}
\newcommand{\tC}{\tilde{C}}
\newtheorem{theorem}{Theorem}[section]
\newtheorem{lemma}[theorem]{Lemma}
\newtheorem{proposition}[theorem]{Proposition}
\theoremstyle{definition}
\newtheorem{definition}[theorem]{Definition}
\theoremstyle{remark}
\numberwithin{equation}{section}
\begin{document}
\setlength{\baselineskip}{1.2\baselineskip}

\title[Entire translating solitons in Minkowski space]
{Entire downward translating solitons to the mean curvature flow in Minkowski space}

\author{Joel Spruck}
\thanks{Research of the first author is partially supported in part by the NSF}
\address{Department of Mathematics, Johns Hopkins University,
 Baltimore, MD 21218}
\email{js@math.jhu.edu}
\author{Ling Xiao}
\address{Department of Mathematics, Rutgers University,
Piscataway, NJ 08854}
\email{lx70@math.rutgers.edu}

\begin{abstract}
In this paper, we study entire translating solutions $u(x)$ to a mean curvature flow  equation in
Minkowski space.  We show that if $\Sigma=\{(x, u(x)) | x\in\mathbb{R}^n\}$ is a strictly spacelike hypersurface, then $\Sigma$ reduces to a strictly convex rank $k$
soliton  in $\bfR^{k,1}$ (after splitting off trivial factors) whose ``blowdown'' converges to a multiple $ \lambda \in (0,1)$  of a positively homogeneous degree one convex function in $\bfR^k$. We also show that there is nonuniqueness as the rotationally symmetric solution may be perturbed to a solution by an arbitrary smooth order one perturbation.
\end{abstract}

\maketitle

\section{Introduction}
\label{sec1}

Let $\mathbb{R}^{n,1}$ be the Minkowski space with Lorentz metric
\[\bar{g}=\sum\limits_{i=1}^ndx_i^2-dx_{n+1}^2.\]
We will say that a hypersurface $\Sigma=\{(x, u(x)) |x\in\Omega\}\subset\mathbb{R}^{n, 1}$ is strictly spacelike
if $u\in C^1(\Omega)$ and $|Du|\leq c_0<1$ in $\Omega.$

Ecker and Huisken \cite{EH} studied the mean curvature flow with forcing term in Minkowski space and proved longtime existence.  More specifically,
they studied the  equation
\be\label{eq1.01}
\frac{\partial u}{\partial t}=\sqrt{1-|Du|^2}\left[\text{div}\left(\frac{Du}{\sqrt{1-|Du|^2}}\right)-\mathcal{H}\right],
\ee
where $u = u(x, t)$ is the height function and $\mathcal{H} = H(x)$ is the forcing term.
Later, M. Aarons \cite{Aar} proved the following convergence result.
\begin{theorem}\label{thm1.1}(\cite{Aar})
Let $M_0$ be a smooth spacelike hypersurface with bounded curvature. Suppose $M_0$ never intersects  future null infinity $I^+$ or past null infinity $I^-$. Then $M_t$ converges under the flow \eqref{eq1.01} to a convex downward translating
soliton, that is, an entire solution of $H=c+a\frac1{\sqrt{1+|\nabla u|^2}},\, a<0$.
\end{theorem}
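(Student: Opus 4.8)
The plan is to run the Ecker--Huisken parabolic machinery and extract the translator as a limit of the flow, so four things have to be established: uniform-in-time spacelike and curvature bounds, subsequential convergence to an eternal solution, rigidity of that limit (it must be a translator), and the convexity and entireness of the limiting translator. First I would use the hypothesis that $M_0$ misses $I^+$ and $I^-$ to trap the flow between fixed spacelike barriers: this hypothesis forces $M_0$ into a slab between two affine spacelike graphs, and affine spacelike graphs (given suitable downward vertical speeds, using boundedness of the forcing term) are super- and sub-solutions of \eqref{eq1.01}, so the scalar maximum principle for the graph equation keeps $u(\cdot,t)$ pinched between time-shifted barriers for all $t\ge0$. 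This yields a uniform bound $|Du|\le c_0<1$, i.e.\ the flow stays uniformly spacelike and \eqref{eq1.01} stays uniformly parabolic---the essential consequence of the no-null-infinity assumption.

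With uniform parabolicity in hand I would invoke the interior estimates for the Minkowski mean curvature flow, the spacelike analogue of the Ecker--Huisken local estimates, to bound $|A|$ and then inductively $|\nabla^m A|$ for $t\ge\delta>0$ in terms of $c_0$, the forcing term $\mathcal H$, and $\delta$ (the bounded curvature of $M_0$ covers short times). Hence for any $t_j\to\infty$ the translated flows $M_{t_j+t}$ subconverge in $C^\infty_{\mathrm{loc}}$, as graphs over $\bfR^n$, to a smooth eternal solution $M_\infty(t)$, $t\in\bfR$, of the limiting flow; it is again an entire graph because the uniform estimates forbid the domain of the graph from shrinking.

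The crux is that $M_\infty$ is a translator. Writing the normal speed as $w:=\partial u/\partial t=\sqrt{1-|Du|^2}\,(H-\mathcal H)$ and differentiating \eqref{eq1.01} in $t$, one sees---crucially because the forcing $\mathcal H=H(x)$ has no explicit $t$-dependence---that $w$ solves a linear \emph{homogeneous} parabolic equation $\partial_t w=a^{ij}D_{ij}w+b^iD_iw$ with no zeroth-order term. Therefore $t\mapsto\sup_x w(\cdot,t)$ is nonincreasing and $t\mapsto\inf_x w(\cdot,t)$ nondecreasing; both are bounded by the estimates above, so they converge, and on $M_\infty$ the speed $w$ is a bounded solution of the same equation whose spatial supremum and infimum are constant in time. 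A strong-maximum-principle/Harnack argument---with the uniform curvature bounds compensating for noncompactness---then forces $w\equiv\sigma$ for a constant $\sigma$, so $u_\infty(x,t)=u_\infty(x)+\sigma t$ with $u_\infty$ solving $H=\mathcal H+\sigma/\sqrt{1-|Du_\infty|^2}$, which is the translator equation in the statement (with $c=\mathcal H$, $a=\sigma$); the sign $a<0$ (``downward'') is dictated by the lower barrier and the choice of orientation.

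It remains to prove $A\ge0$ on $M_\infty$. I would obtain this by a tensor maximum principle: along the flow $A$ evolves as $\partial_t A=\Delta A+(\text{lower-order reaction})$, and one checks that near a translator, with the sign of $a$, the reaction term satisfies the null-eigenvector condition so that $A\ge0$ is preserved---this is the Minkowski counterpart of the convexity results for Euclidean translators---or, equivalently, one applies the strong maximum principle to the least eigenvalue of $A$ directly on $M_\infty$ using the translator equation. I expect the real obstacles to be threefold: making the estimates of the first two steps uniform in $t$ in a noncompact ambient space, which is exactly what the no-null-infinity hypothesis buys; the rigidity step, where ``$w\equiv\mathrm{const}$'' must be proved on a noncompact eternal solution, so the compact strong maximum principle has to be replaced by an Omori--Yau type argument resting on the uniform bounds; and the convexity, which is a substantive statement in its own right and would likely be imported from or modeled on the Euclidean case.
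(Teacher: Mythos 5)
The paper does not prove Theorem~\ref{thm1.1}; it is quoted verbatim from Aarons~\cite{Aar} and used as motivation, so there is no in-paper proof to compare your sketch against. Your outline — barriers from the no-null-infinity hypothesis, uniform spacelikeness and interior $C^k$ estimates, subconvergence to an eternal graphical solution, monotonicity of the speed $w=u_t$ from the $t$-homogeneous linearized equation, and then rigidity forcing $w\equiv\mathrm{const}$ — is the standard Ecker--Huisken parabolic strategy and is, as far as one can judge without Aarons' paper in hand, the right shape of argument for this result. Two cautions are worth registering. First, the rigidity step is genuinely the crux: constancy in $t$ of $\sup_x w$ and $\inf_x w$ on the eternal limit does not by itself give $w\equiv\mathrm{const}$ on a noncompact surface where the extrema may not be attained, and you are right that an Omori--Yau or Harnack-type argument is needed; this is where the bulk of the technical work sits. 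Second, your route to convexity of the limit (propagating $A\ge0$ along the flow via a tensor maximum principle) differs in spirit from what this paper actually does later: Spruck--Xiao establish mean convexity (Lemma~\ref{lem2.1}) and then convexity (Lemma~\ref{lem3.2}) \emph{directly for the elliptic translator equation} from the strictly spacelike hypothesis, using Omori--Yau and Heidusch's $\delta$-regularized minimum — they need no flow argument and no assumption that convexity was preserved from $M_0$. That elliptic proof of convexity is one of the paper's own contributions, so it should not be conflated with the parabolic convergence statement you are sketching; also note that the limit equation has a \emph{constant} $c$, which implicitly requires $\mathcal{H}$ to be constant (or to stabilize), a point your identification $c=\mathcal{H}$ glosses over.
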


 After a rescaling, we may assume $a=-1$ and consider $H=C-\frac{1}{\sqrt{1-|Du|^2}},$ where $C>1$ is a constant.
We obtain
\be \label{eq1.05}
 \text{div}\left(\frac{\nabla u}{\sqrt{1-|Du|^2}}\right)=C-\frac1{\sqrt{1-|Du|^2}}
 \ee
or in nondivergence form
\be \label{eq1.1}
\left(\delta_{ij}+\frac{u_iu_j}{1-|Du|^2}\right)u_{ij}=C\sqrt{1-|Du|^2}-1.
\ee
Notice that any $u=\vec{v}\cdot x +b,\, |v| =\sqrt{1-\frac1{C^2}}$ (a maximal hypersurface) is a solution of \eqref{eq1.1}. The existence of a unique (up to translation) radial symmetric solution of \eqref{eq1.1} was shown by Ju, Lu and Jian \cite{JLJ}.

Aarons  \cite{Aar} in fact conjectured that any solution
$u$ of \eqref{eq1.05} is either rotationally symmetric about some point $x_0$ or
is a hyperplane. However, this conjecture is not correct.
Let $x'=(x_1, \ldots, x_k) $ and set
$u(x) =\sum_{i=k+1}^n a_i x_i+h(x')$
where $h$ is strictly convex in $x'$. Then $u$ satisfies \eqref{eq1.1} if and only if
 $h$ satisfies
\be \label{eq1.06} \sum_{i,j=1}^k (\delta_{ij}+\frac{h_i h_j}{1-|a|^2-|Dh|^2})h_{ij}
=C\sqrt{1-|a|^2} \sqrt{1-\frac{|Dh|^2}{1-|a|^2} }\ -1~.\ee

Now let $ \tilde h =\frac1{\lambda^2}h(\lambda xÕ),\,\, \tilde{C}=\lambda C>1$
with $\lambda=\sqrt{1-|a|^2}$.
Then $\tilde h$ satisfies
\be \label{eq1.07} \sum_{i, j=1}^k (\delta_{ij}+\frac{ {\tilde h}_i {\tilde h}_j}{1-|D{\tilde h}|^2}){\tilde h}_{ij}
=\tilde{C} \sqrt{1-|D{\tilde h}|^2}\ -1~.\ee

Thus $\tilde h$ is a rank $k$ solution of \eqref{eq1.1} in $\bfR^{k}$ with $C$ replaced by $\tilde{C}=\lambda C>1$.

In fact, we will show a splitting theorem analogous to what Choi-Treibergs \cite{CT} proved for spacelike  constant mean curvature hypersurfaces.
\begin{theorem} \label{th1.0}
 Let  $u$ be a strictly spacelike solution of \eqref{eq1.05} and let $\Sigma=\{(x, u(x)) | x\in\mathbb{R}^n\}$ be the graph of $u$. Then $\Sigma$ is convex with uniformly bounded second fundamental form. Moreover after an $R^{n,1}$ rigid motion, $R^{n,1}$ splits as a product $R^{k,1} \times R^{n-k}$
such that $\Sigma$ also splits as a product $\Sigma^{k}\times R^{n-k} $ where $\Sigma^{k}=R^{n,1}\cup R^{k,1}$ is a strictly convex
graphical solution in $\bfR^{k,1}$
 \end{theorem}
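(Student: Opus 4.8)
The plan is to argue in three stages: first extract bounded geometry from the strict spacelike hypothesis, then prove convexity of $\Sigma$ by a Simons-type identity for the soliton together with a maximum principle, and finally deduce the splitting from a constant-rank theorem for the second fundamental form. \emph{Stage 1 (bounded geometry).} The condition $|Du|\le c_0<1$ on $\mathbb{R}^n$ gives at once $v:=(1-|Du|^2)^{-1/2}\le v_0:=(1-c_0^2)^{-1/2}$, i.e.\ the hyperbolic Gauss image $N(\Sigma)\subset\bfH^n$ stays in a fixed compact geodesic ball; writing the soliton equation as $H=C-v=C+\langle N,e_{n+1}\rangle$ we get $|H|\le C+v_0$. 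I would then invoke the interior curvature estimates for spacelike hypersurfaces of bounded mean curvature in Minkowski space (Ecker, cf.\ \cite{EH}), applied on intrinsic balls $B_R\subset\Sigma$ with $R\to\infty$ — the estimate being scale-correct so the $R$-dependence drops — to obtain a uniform bound $|A|\le\Lambda$; bootstrapping the uniformly elliptic equation \eqref{eq1.1} then bounds all covariant derivatives of $A$, so $\Sigma$ is complete with bounded geometry and the Omori--Yau maximum principle is available. This is the only place the strict spacelike hypothesis is used, and it is essential (for merely spacelike entire graphs the conclusion already fails in the constant mean curvature case).

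\emph{Stage 2 (convexity).} Set $V=e_{n+1}$, $V^\top=V-vN$ (so $|V^\top|^2=v^2-1$ is bounded). Differentiating $H=C+\langle V,N\rangle$ with the Weingarten relation gives $\nabla_iH=h(e_i,V^\top)$ and then $\nabla_i\nabla_jH=(\nabla_kh_{ij})V^{\top k}-v\,(h^2)_{ij}$ (using Codazzi). Feeding this into the Lorentzian Simons identity $\Delta h_{ij}=\nabla_i\nabla_jH-H\,(h^2)_{ij}+|A|^2h_{ij}$ and using $H=C-v$, the first-order terms assemble into the drift Laplacian $\mathcal L:=\Delta-\langle V^\top,\nabla\,\cdot\,\rangle$ and one is left with
\[
\mathcal L h_{ij}=|A|^2h_{ij}-C\,(h^2)_{ij}.
\]
On the smallest-eigenvalue direction $e_1$ (eigenvalue $\kappa_1$) this reads $\mathcal L h_{11}=\kappa_1(|A|^2-C\kappa_1)$, which is negative, and by Stage 1 bounded away from $0$, wherever $\kappa_1\le-\delta<0$. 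If $\inf_\Sigma\kappa_1<0$ I would apply Omori--Yau to a smooth concave regularization of $\kappa_1$, e.g.\ the soft minimum $F_\varepsilon=-\varepsilon\log\sum_i e^{-\kappa_i/\varepsilon}$, whose concavity yields the clean inequality $\mathcal L(F_\varepsilon(h))\le \dot F_\varepsilon^{ij}\,\mathcal L h_{ij}$ and so sidesteps the eigenvalue-multiplicity problem: one gets a sequence $p_m$ with $F_\varepsilon(h)(p_m)\to\inf<0$, $\nabla F_\varepsilon(p_m)\to0$, $\liminf\mathcal L(F_\varepsilon(h))(p_m)\ge0$, contradicting the sign of the reaction term. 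Hence $\kappa_1\ge0$, i.e.\ $u$ is a convex function. Getting the signs in the Lorentzian Simons identity exactly right and making this maximum-principle argument go through on the noncompact $\Sigma$ is the main obstacle; a fallback is to establish convexity ``at infinity'' from the bounded Gauss image in the spirit of Choi--Treibergs \cite{CT} and propagate it inward.

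\emph{Stage 3 (splitting).} Now $u$ is a convex solution of \eqref{eq1.1}, which is linear in $D^2u$ with smooth coefficients, so the constant-rank theorem (Caffarelli--Friedman, Korevaar, Bian--Guan) shows $D^2u$, hence $h=(1-|Du|^2)^{-1/2}D^2u$, has constant rank $k$ on $\mathbb{R}^n$. If $k=n$, $\Sigma$ is strictly convex and there is nothing more to prove (the flat factor is trivial). If $k<n$, the distribution $\mathcal K:=\ker h$ is smooth of rank $n-k$; Codazzi (together with the microscopic convexity machinery in its geometric form) gives that $\mathcal K$ is parallel with totally geodesic leaves, while $\bar\nabla_EN=h(E,\cdot)^\sharp=0$ for $E\in\mathcal K$ shows $N$ is constant along each leaf, so the leaves are mutually parallel affine $(n-k)$-planes, all translates of a fixed spacelike subspace $\mathcal K_0\subset\mathbb{R}^{n,1}$. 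Thus $\Sigma$ is invariant under translation by $\mathcal K_0$ and splits isometrically as $\Sigma^k\times\mathcal K_0$ with $\Sigma^k:=\Sigma\cap\mathcal K_0^{\perp}$ (orthogonal complement with respect to $\bar g$). Finally, writing $V=V'+V''$ with $V'\in\mathcal K_0^{\perp}$ and $V''\in\mathcal K_0$, the relation $\langle V,N\rangle=\langle V',N\rangle$ shows the soliton equation on $\Sigma$ depends only on $V'$; applying an $\mathbb{R}^{n,1}$ rigid motion carrying $\mathcal K_0$ to $\mathrm{span}(e_{k+1},\dots,e_n)$ and $\mathcal K_0^{\perp}$ to $\mathbb{R}^{k,1}$, we obtain $\mathbb{R}^{n,1}=\mathbb{R}^{k,1}\times\mathbb{R}^{n-k}$, $\Sigma=\Sigma^k\times\mathbb{R}^{n-k}$, with $\Sigma^k\subset\mathbb{R}^{k,1}$ a strictly convex (rank $k$) entire spacelike soliton — it solves the $\mathbb{R}^k$-analogue of \eqref{eq1.05} after the homothetic normalization $V'\mapsto V'/|V'|$, which rescales the constant by $\lambda=|V'|^{-1}\in(0,1]$ exactly as in the passage \eqref{eq1.06}--\eqref{eq1.07}. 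This is the asserted splitting.
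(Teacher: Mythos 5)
Your overall strategy matches the paper's — bound $|A|$, prove convexity via a Simons-type tensor PDE together with a smoothed minimum eigenvalue and the Omori--Yau principle, then pass to constant rank and split — but two intermediate steps are routed differently, and one of them is in fact cleaner. The paper does not invoke external interior curvature estimates; it derives the $|A|^2$ bound directly from the Simons identity \eqref{eq3.5} with a cutoff $\eta^2=(a^2-r^2)^2$ on intrinsic balls and Yau's Laplacian comparison (see \eqref{eq3.7}--\eqref{eq3.9}), which is self-contained for exactly the scale-invariance reason you mention; your appeal to Ecker-type interior estimates is legitimate but replaces a short in-house computation with a citation. More substantively, the paper first proves a separate Lemma~\ref{lem2.1} that $H=C-v\ge 0$, and needs this mean convexity in \eqref{eq3.12}--\eqref{eq3.13} to drop the terms multiplied by $H$ with the correct sign. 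Your observation that the $-v(h^2)_{ij}$ coming from $\nabla_i\nabla_j H$ and the $-H(h^2)_{ij}$ from Simons combine to $-(v+H)(h^2)_{ij}=-C(h^2)_{ij}$, so that $\mathcal L h_{ij}=|A|^2h_{ij}-C(h^2)_{ij}$ has a reaction term whose sign on the bottom eigenvalue is controlled by $C>0$ alone, genuinely bypasses the mean convexity lemma and is a nicer route (both you and the paper still need a concave smoothing of $\lambda_{\min}$ — your log-sum-exp soft minimum versus the Heidusch $\mu_n$ of Definition~\ref{def3.1} — to make Omori--Yau applicable in the presence of eigenvalue crossings). For the splitting, the paper argues coordinatewise: after a boost it differentiates \eqref{eq3.15} twice in $x_n$ and cites Korevaar's constant-rank argument as in Choi--Treibergs to conclude $u_{nn}\equiv 0$, then peels off one Euclidean factor and iterates through \eqref{eq3.20}--\eqref{eq13.25}; your version (constant rank, parallel totally geodesic kernel foliation, Gauss map constant along leaves) reaches the same conclusion more invariantly, and both finish with the rescaling of \eqref{eq1.06}--\eqref{eq1.07}.
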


  Thus, it is  natural to ask if Aarons conjecture correct for $u,$  a strictly convex solution of \eqref{eq1.05} in $\mathbb{R}^n$ ?
In other words, is $\Sigma=\{(x, u(x))| x\in \mathbb{R}^n\}$ rotationally symmetric? The answer is no.

\begin{theorem}\label{th2.0}
Let $f\in C^2(S^{n-1}_{\tC}),\, \tC=\sqrt{1-(\frac1C)^2}$. Then there exists an entire strictly spacelike hypersurface $u$
satisfying equation \eqref{eq1.05} such that
\[u(x)\goto \tC |x|-\frac{n-1}{C^2}\log |x|+f(\tC x)\,\,\mbox{as $|x|\goto\infty$}.\]
\end{theorem}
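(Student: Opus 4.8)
The plan is to construct the solution by a barrier/continuity argument on large balls $B_R$, following the Perron-type method used in related translating soliton problems. First I would fix the prescribed boundary behavior at infinity: the radial soliton $u_0(x)$ (whose existence is due to Ju-Lu-Jian \cite{JLJ}) has the asymptotic expansion $u_0(x) = \tC|x| - \frac{n-1}{C^2}\log|x| + O(1)$, and I would derive this expansion carefully by analyzing the ODE that $u_0$ satisfies — writing $u_0 = u_0(r)$, equation \eqref{eq1.1} becomes a first-order ODE for $\psi = u_0'/\sqrt{1-u_0'^2}$, and one reads off that $u_0' \to \tC$ with the precise rate giving the $\log$ term. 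Then, given $f\in C^2(S^{n-1}_{\tC})$, I would look for $u$ of the form $u = u_0 + v$ where $v$ is bounded and $v(x) \to f(\tC x/|x|)$ appropriately; equivalently, I would prescribe on $\partial B_R$ the Dirichlet data $\varphi_R(x) = u_0(x) + f(\tC x/|x|)$ (suitably extended/cut off) and solve the Dirichlet problem for \eqref{eq1.1} on $B_R$.

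The second step is the Dirichlet problem on $B_R$ itself. Equation \eqref{eq1.1} is quasilinear elliptic on the class of strictly spacelike functions, so I would invoke the standard continuity method: the key estimates are (i) a $C^0$ bound, obtained from upper and lower barriers built out of spacelike hyperplanes $\vec v\cdot x + b$ with $|\vec v| = \tC$ (which solve \eqref{eq1.1}) and out of pieces of radial solitons; (ii) a gradient bound $|Du|\le c_0 < 1$ — this is the crucial spacelike-ness estimate, and here I would use the interior gradient estimate of Ecker-Huisken type together with boundary barriers, exploiting that $\varphi_R$ is itself strictly spacelike with $|D\varphi_R|$ bounded away from $1$ uniformly in $R$ once $R$ is large (because $\nabla u_0 \to \tC \nu$ and the $f$-correction is lower order); (iii) once $|Du| \le c_0 < 1$, the equation is uniformly elliptic and concave (the convexity structure from Theorem \ref{th1.0} and the form of the operator give $C^{2,\alpha}$ estimates via Evans-Krylov), closing the continuity method.

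The third step is passing to the limit $R\to\infty$. With uniform interior $C^{2,\alpha}_{loc}$ bounds (independent of $R$), a diagonal/Arzelà-Ascoli argument produces an entire strictly spacelike solution $u$ of \eqref{eq1.05} with $|Du|\le c_0<1$. The remaining work — and I expect this to be the main obstacle — is to show that this limit solution has exactly the prescribed asymptotics $u(x) = \tC|x| - \frac{n-1}{C^2}\log|x| + f(\tC x) + o(1)$ rather than merely being trapped between crude barriers. For this I would need sharper barriers near infinity: refined super- and sub-solutions of the form $\tC|x| - \frac{n-1}{C^2}\log|x| + f(\tC x/|x|) \pm \e(|x|)$ with $\e(|x|)\to 0$, constructed by plugging this ansatz into \eqref{eq1.1} and checking that the error terms have the right sign for $|x|$ large (this uses that $f$ is only $C^2$, so one must be careful that the angular Laplacian $\Delta_{S^{n-1}} f$ enters only at order $|x|^{-1}$, which is lower order than the leading balance). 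Sandwiching $u$ between these refined barriers on each annulus $B_{2\rho}\setminus B_\rho$ and letting $\rho\to\infty$ gives the claimed convergence, and the strict convexity/asymptotic flatness from Theorem \ref{th1.0} guarantees the limit is genuinely distinct from a hyperplane when $f$ is nonconstant, establishing nonuniqueness.
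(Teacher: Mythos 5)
Your overall framework (exhaust $\mathbb{R}^n$ by bounded domains, solve Dirichlet problems, establish uniform estimates, pass to the limit, and sandwich by barriers to get asymptotics) is the right skeleton, and steps (i)--(iii) on the Dirichlet problem and the compactness argument essentially match the paper's Lemma~\ref{lem5.1} and the discussion after \eqref{eq5.10}. Where you diverge substantively is in the construction of the barriers, which is in fact the heart of the argument and exactly what you flag as ``the main obstacle.'' You propose building super- and sub-solutions by plugging the ansatz $\tC|x|-\frac{n-1}{C^2}\log|x|+f(\tC x/|x|)\pm\e(|x|)$ directly into \eqref{eq1.1} and tracking error signs; this is delicate because $f$ is only $C^2$, the angular Laplacian term is merely continuous, and one must show the error has a definite sign uniformly. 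The paper (following Treibergs \cite{T}) sidesteps this entirely: since \eqref{eq1.05} is invariant under translations of the independent variable, the functions $z_i(x;\tC y)=f(\tC y)-p_i(\tC y)\cdot \tC y+\psi(x+p_i(\tC y))$ (with $\psi$ the radial soliton of \cite{JLJ} and $p_1,p_2$ shifted sub/super-gradients of $f$) are \emph{exact} solutions, hence automatically sub- and super-solutions, and the envelopes $q_1=\sup_y z_1$, $q_2=\inf_y z_2$ are sub/super-solutions which both inherit the asymptotics $\tC|x|-\frac{n-1}{C^2}\log|x|+f(\tC x)+o(1)$ from $\psi$ and from \eqref{eq5.2}. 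This completely avoids the perturbative computation you were worried about; the only analytic input is the asymptotic expansion of $\psi$ and the $C^2$ modulus of continuity of $f$. A secondary difference: rather than prescribing the data $u_0+f$ on $\partial B_R$, the paper picks a strictly spacelike $\phi$ sandwiched between $q_1$ and $q_2$, takes the sublevel sets $G_m=\phi^{-1}((-\infty,m))$ as the exhausting convex domains, and solves with \emph{constant} boundary data $m$; then $q_1\le u_m\le q_2$ follows immediately by comparison because $q_1\le m\le q_2$ on $\partial G_m$. Your ball-plus-tailored-data scheme can be made to work but requires re-verifying the barrier inequalities on $\partial B_R$. Finally, a small inaccuracy: you invoke Evans--Krylov, but \eqref{eq1.1} is quasilinear, so once $|Du|\le c_0<1$ is secured, standard quasilinear Schauder/De Giorgi--Nash theory (together with the $|A|^2$ bound from Lemma~\ref{lem3.2}) gives the interior $C^{2,\alpha}$ estimates; concavity in the Hessian is not needed.
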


As in the work of Treibergs \cite{T} and Choi-Treibergs \cite{CT}, the blow-down
 of a convex strictly spacelike solution $V_u=\lim_{r\goto \infty}\frac{u(rx)}r$ converges uniformly on compact subsets to
the space $\tC \mathcal{Q}$ of convex homogeneous degree one convex functions
whose gradient has magnitude $\tC$ wherever defined. It was shown in \cite{CT}
that the space $\mathcal{Q}$ is in one to one correspondence with the  set of {\em lightlike} directions
\[L_u:=\{x\in S^{n-1}: V_u(x)=1\}~.\]
It may be possible that  any cone in $\tC \mathcal{Q}$ arises as the blow-down of a solution to \eqref{eq1.05} but we have not  shown this.

An outline of the paper is as follows. In section \ref{sec2} we show the strictly space like assumption implies that the graph $\Sigma$ is mean convex. Then in section \ref{sec3} we show $\Sigma$ is in fact convex and then prove the splitting Theorem \ref{th1.0}.  In section \ref{sec4} we study the blow-down $V_u$ and finally in section \ref{sec5}
following \cite{T}, we construct counterexamples for the radial cone in $\tC \mathcal{Q}$ and prove Theorem \ref{th2.0}

\section{Strictly spacelike implies mean convex}
\label{sec2}
\setcounter{equation}{0}
Let $a^{ij}=\delta_{ij}+\frac{u_iu_j}{w^2},$ where $w=(1-|Du|^2)^{1/2}; $
then equation \eqref{eq1.1} becomes
\be\label{eq2.1}
a^{ij}u_{ij}=Cw-1
\ee
Then $w_i=-\frac{u_ku_{ki}}{w}$ and
\be\label{eq2.2}
\begin{aligned}
w_{ij}&=-\frac{u_{ki}u_{kj}}{w}-\frac{u_ku_{kij}}{w}+\frac{u_ku_{ki}w_j}{w^2}\\
&=-\frac{u_ku_{kij}}{w}-\frac{1}{w}\left(u_{ki}u_{kj}+\frac{u_ku_{ki}u_lu_{lj}}{w^2}\right)\\
&=-\frac{u_ku_{kij}}{w}-\frac{1}{w}a^{kl}u_{ki}u_{lj}.\\
\end{aligned}
\ee
\begin{lemma}\label{lem2.1}
Suppose $\Sigma=\{(x, u(x)) | x\in\mathbb{R}^n\}$ is a strictly spacelike hypersurface, and $u(x)$
satisfies equation \eqref{eq1.1}. Then $\Sigma$ is mean convex, that is $H\geq 0.$
\end{lemma}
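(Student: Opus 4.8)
The statement is really a gradient bound in disguise. Dividing \eqref{eq2.1} by $w$ and recalling that the mean curvature of the graph is $H=\operatorname{div}(Du/w)=\frac1w a^{ij}u_{ij}$, we obtain the pointwise identity $H=C-\frac1w$. Hence $H\ge 0$ iff $w\ge \frac1C$, i.e.\ $|Du|^2\le 1-\frac1{C^2}$. The plan is to prove $w\ge 1/C$ by a maximum principle applied to $v:=1/w=(1-|Du|^2)^{-1/2}$. Two manifest facts make this work: strict spacelikeness gives the a priori two-sided bound $1\le v\le (1-c_0^2)^{-1/2}=:M$, and the matrix $(a^{ij})$ is uniformly elliptic, $\mathrm{Id}\le (a^{ij})\le M^2\,\mathrm{Id}$. (Here $u$ may be taken smooth by standard elliptic regularity for the uniformly elliptic equation \eqref{eq1.05}, which justifies using \eqref{eq2.2} and differentiated forms of \eqref{eq2.1}.)

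The first step is to derive the equation satisfied by $v$. Starting from \eqref{eq2.2}, differentiating \eqref{eq2.1} to get $a^{ij}u_{kij}=Cw_k-(\p_k a^{ij})u_{ij}$, and using $w_i=-u_ku_{ki}/w$, the elementary identity $u_k(\p_k a^{ij})u_{ij}=2|Dw|^2+2\langle Du,Dw\rangle^2/w^2$, and the fact that $a^{ij}a^{kl}u_{ki}u_{lj}=w^2|A|^2$ — since $(a^{ij})$ is the inverse of the induced metric $\delta_{ij}-u_iu_j$ and $u_{ij}/w$ is the second fundamental form, $|A|^2$ being its squared norm — a direct computation gives
\be\label{e:wpp}
a^{ij}w_{ij}=-\frac{C\,\langle Du,Dw\rangle}{w}+\frac{2|Dw|^2}{w}+\frac{2\langle Du,Dw\rangle^2}{w^3}-w|A|^2.
\ee
Feeding \eqref{e:wpp} and $a^{ij}w_iw_j=|Dw|^2+\langle Du,Dw\rangle^2/w^2$ into $a^{ij}v_{ij}=-w^{-2}a^{ij}w_{ij}+2w^{-3}a^{ij}w_iw_j$, all the first-order terms cancel, and using $\langle Du,Dw\rangle=-w^2\langle Du,Dv\rangle$ one is left with
\be\label{e:vpp}
a^{ij}v_{ij}+C\,v\,\langle Du,Dv\rangle=v|A|^2\ \ge\ \frac{v\,H^2}{n}=\frac{v\,(C-v)^2}{n}\ \ge\ 0,
\ee
using $H=C-v$ and the Cauchy--Schwarz inequality $H^2\le n|A|^2$. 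Thus $v$ is a bounded subsolution on all of $\bfR^n$ of a uniformly elliptic operator with bounded drift coefficient $C v\,u_k$.

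The main obstacle is that $\bfR^n$ is noncompact, so $\sup v$ need not be attained and the naive maximum principle is unavailable; note that at an \emph{interior} maximum of $v$ equation \eqref{e:vpp} would at once force $(C-v)^2\le 0$, hence $v\le C$. To handle the supremum possibly being approached only at infinity, I would insert the logarithmic barrier $\eta(x)=\log(1+|x|^2)\ge 0$, which tends to $\infty$ and satisfies $|a^{ij}\eta_{ij}|+CM|\langle Du,D\eta\rangle|\le K$ for a constant $K=K(n,C,c_0)$, by uniform ellipticity and $|Du|\le c_0$. If $v(p_0)>C$ for some $p_0$, set $\delta=\tfrac12(v(p_0)-C)>0$. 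For each $\e>0$ the function $v-\e\eta$ tends to $-\infty$ at infinity, hence attains an interior maximum at some $q_\e$; feeding $Dv(q_\e)=\e D\eta(q_\e)$ and $a^{ij}v_{ij}(q_\e)\le \e\,a^{ij}\eta_{ij}(q_\e)$ into \eqref{e:vpp} gives $\frac{v(q_\e)(C-v(q_\e))^2}{n}\le \e K$, while maximality of $v-\e\eta$ at $q_\e$ gives $v(q_\e)\ge v(p_0)-\e\,\eta(p_0)\ge C+\delta$ once $\e$ is small, so the left side is $\ge \delta^2/n>0$. Letting $\e\to 0$ yields a contradiction. Hence $v\le C$ on $\bfR^n$, i.e.\ $w\ge 1/C$ and $H\ge 0$. (Alternatively one can translate $\Sigma$ along a sequence realizing $\sup v$, extract a $C^2_{\mathrm{loc}}$ limit solution via interior estimates, and apply the interior maximum principle to the limit; the barrier version is preferable since it uses nothing beyond the manifest bound $v\le M$.)

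I expect the only genuinely delicate step to be the algebra leading to \eqref{e:vpp}: one must verify that after substituting the differentiated equation \eqref{eq2.1} into \eqref{eq2.2} every first-order term ($|Dw|^2$ and $\langle Du,Dw\rangle^2$) cancels, leaving exactly $v|A|^2$ plus the benign drift term; once \eqref{e:vpp} is in hand the rest is soft.
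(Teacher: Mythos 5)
Your proof is correct and takes essentially the same approach as the paper's: you derive the same PDE for $v=1/w$ (this is the paper's equation (2.72)), observe $H=C-v$, and apply a maximum-principle-at-infinity argument to conclude $v\le C$. The only difference is technical: the paper invokes the Omori--Yau maximum principle to produce an almost-maximizing sequence for $v$, while you construct an explicit logarithmic barrier $\eta=\log(1+|x|^2)$; both are valid since $v$ is a priori bounded by strict spacelikeness and $(a^{ij})$ is uniformly elliptic with bounded drift, and your barrier version is somewhat more elementary and self-contained.
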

\begin{proof}
We differentiate equation \eqref{eq2.1} with respect to $x_k$ to obtain
\be\label{eq2.3}
(a^{ij})_ku_{ij}+a^{ij}u_{ijk}=Cw_k.
\ee
Since
\be\label{eq2.4}
\begin{aligned}
(a^{ij})_ku_{ij}&=\left(\frac{u_{ik}u_j}{w^2}+\frac{u_iu_{jk}}{w^2}-2\frac{u_iu_jw_k}{w^3}\right)u_{ij}\\
&=\frac{2}{w}\left(\frac{u_{ik}u_j}{w}+\frac{u_iu_ju_lu_{lk}}{w^3}\right)u_{ij}\\
&=\frac{2}{w}\left(-w_iu_{ik}-\frac{u_iu_lu_{lk}}{w^2}w_i\right)\\
&=-\frac{2}{w}\left(\delta_{ij}+\frac{u_iu_j}{w^2}\right)w_iu_{kj}\\
&=-\frac{2}{w}a^{ij}w_iu_{kj},\\
\end{aligned}
\ee
and $u_{ijk}=u_{kij}$, this gives
\be\label{eq2.5}
a^{ij}u_{kij}-\frac{2}{w}a^{ij}w_iu_{kj}=Cw_k.
\ee
Multiplying \eqref{eq2.5} by $\frac{u_k}w$
and using
$\frac{u_ku_{kij}}{w}=-w_{ij}-\frac{1}{w}a^{kl}u_{ki}u_{lj},$ we obtain
\be\label{eq2.55}
a^{ij}w_{ij}-2\frac{a^{ij}w_iw_j}w+C \frac{u_k}w w_k=-\frac{1}{w}a^{ij}a^{kl}u_{ki}u_{lj}.
\ee

We now observe that since  $|A|^2=\frac{1}{w^2}a^{ij}a^{kl}u_{ki}u_{lj}$ we can rewrite
\eqref{eq2.55} as
\be \label{eq2.72}
a^{ij}(\frac1w)_{ij}+C\frac{u_k}w (\frac1w)_k=|A|^2 \frac1w~.
\ee

The Omori-Yau maximum principle (see for example \cite{Y75}, \cite{PGS}) implies that $\frac1w$ achieves its maximum at infinity and moreover, there
exists a sequence $\{P_N\}$ such that $\frac1w(P_N)\goto \sup{\frac1w},\, |\nabla (\frac1w)|(P_N)<1/N,$ and $(\frac1w)_{ij}(P_N)\geq-1/N \delta_{ij}$.
Therefore,
\be\label{eq2.8}
\frac1{n} H^2  \frac1w \leq |A|^2 \frac1{w} \leq \frac{C_1}N\frac{1}{w} \hspace{.2in} \text{at $P_N$}.
\ee
Thus
 $H(P_N)\rightarrow 0$ at infinity. Since  $H=C-\frac1w$  we obtain $\inf{H}=0$ .
\end{proof}

%

\section{Mean convexity implies convexity and constant rank}
\label{sec3}
\setcounter{equation}{0}
In this section, we will use ideas due to Hamilton \cite{Ham} to prove that under the strictly spacelike assumption,
$\Sigma$ is in fact convex. We use the following approximation of Heidusch\cite{Heid}.

\begin{definition}
\label{def3.1}
The $\delta$-approximation to the function $\min(x_1, x_2)$ is given by
\[\mu_2(x_1,x_2)=\frac{x_1+x_2}{2}-\sqrt{\left(\frac{x_1-x_2}{2}\right)^2+\delta^2}\]
for any $\delta>0.$ The $\delta$-approximation to the function $\min(x_1,x_2,\cdots,x_n)$
is defined recursively by
\[\mu_n=\frac{1}{n}\sum_{i=1}^n\mu_2(x_i,\,\mu_{n-1}(x_1,\cdots,x_{i-1},x_{i+1},\cdots,x_n))\]
\end{definition}

The following lemma \ is elementary (see \cite{Aar}).
\begin{lemma}
\label{lem3.1}
For every $\delta>0$ and $n\geq 2$ we have:\\
1. $\mu_n$ is smooth, symmetric, monotonically increasing and concave.\\
2. $\frac{\partial \mu_n}{\partial x_i}\leq 1$.\\
3. $\min(x_1,\cdots, x_n)-n\delta\leq \mu_n\leq\min(x_1, \cdots, x_n)$.\\
4. For $x\in\mathbb{R}^n$ we have
\[
\mu_n\leq\sum_{i=1}^n\frac{\partial \mu_n}{\partial x_i}x_i\leq \mu_n+n\delta,\]
\,\,\,and $\sum_{i=1}^n\frac{\partial \mu_n}{\partial x_i}x_i^2\geq \mu_n^2-n\delta^2-\frac{n\delta}{4}\sum\limits_{1\leq i<j\leq n}|x_i+x_j|.$
\end{lemma}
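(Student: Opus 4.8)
The plan is to prove all five assertions by induction on $n$, disposing of the base case $n=2$ by direct computation and then propagating each property through the recursion $\mu_n=\tfrac1n\sum_{i=1}^n\mu_2\bigl(x_i,\mu_{n-1}(\hat x_i)\bigr)$, where $\hat x_i:=(x_1,\dots,x_{i-1},x_{i+1},\dots,x_n)$.

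First I would settle $\mu_2$. Writing $s=\tfrac{x_1-x_2}{2}$ and $R=\sqrt{s^2+\delta^2}\ge\delta>0$ we have $\mu_2=\tfrac{x_1+x_2}{2}-R$; since the radicand is bounded below by $\delta^2$, $\mu_2$ is smooth, and it is visibly symmetric. One computes $\partial_{x_1}\mu_2=\tfrac12\bigl(1-\tfrac sR\bigr)$ and $\partial_{x_2}\mu_2=\tfrac12\bigl(1+\tfrac sR\bigr)$, both in $(0,1)$ because $|s|<R$; this gives monotonicity, the bound $\partial_{x_i}\mu_2\le1$, and $\partial_{x_1}\mu_2+\partial_{x_2}\mu_2=1$. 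Concavity is immediate, since $\mu_2$ is the affine function $\tfrac{x_1+x_2}{2}$ minus the convex function $R$. Assertion 3 follows from $\min(x_1,x_2)=\tfrac{x_1+x_2}{2}-|s|$ together with $|s|\le R\le|s|+\delta$. For assertion 4 one computes directly that $\sum_i(\partial_{x_i}\mu_2)x_i=\mu_2+\tfrac{\delta^2}{R}$ and $\sum_i(\partial_{x_i}\mu_2)x_i^2=\mu_2^2-\delta^2+\tfrac{\delta^2}{R}(x_1+x_2)$, after which $0<\tfrac{\delta^2}{R}\le\delta$ yields estimates of the claimed form.

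For the inductive step I would assume the statement for $\mu_{n-1}$, set $y_i:=\mu_{n-1}(\hat x_i)$, and differentiate: $\partial_{x_k}\mu_n=\tfrac1n\bigl[\partial_1\mu_2(x_k,y_k)+\sum_{i\ne k}\partial_2\mu_2(x_i,y_i)\,\partial_{x_k}\mu_{n-1}(\hat x_i)\bigr]$, where $\partial_1,\partial_2$ denote the derivatives in the first, resp.\ second, slot of $\mu_2$ (the diagonal term $i=k$ is special because $\hat x_k$ does not involve $x_k$). Smoothness and symmetry are manifest; since every factor $\partial_1\mu_2,\partial_2\mu_2,\partial_{x_k}\mu_{n-1}$ lies in $[0,1]$ by the base case and the inductive hypothesis, each of the $n$ bracketed terms lies in $[0,1]$, so $0\le\partial_{x_k}\mu_n\le1$, which gives monotonicity and assertion 2. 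Concavity follows from the composition rule, using that $\mu_2$ is jointly concave and nondecreasing in its second slot and that $\mu_{n-1}$ is concave. Assertion 3 propagates from $\mu_2(x_i,y_i)\le\min(x_i,y_i)\le\min_j x_j$ and $\mu_2(x_i,y_i)\ge\min(x_i,y_i)-\delta\ge\min_j x_j-n\delta$ upon averaging over $i$. For assertion 4 I would multiply the formula for $\partial_{x_k}\mu_n$ by $x_k$, respectively $x_k^2$, sum over $k$, and interchange the order of summation; the inner sums $\sum_{k\ne i}\partial_{x_k}\mu_{n-1}(\hat x_i)x_k$ and $\sum_{k\ne i}\partial_{x_k}\mu_{n-1}(\hat x_i)x_k^2$ are then controlled by the inductive hypothesis, reducing matters to the $n=2$ identities applied to the pairs $(x_i,y_i)$ together with the elementary bound $\tfrac1n\sum_i\mu_2(x_i,y_i)^2\ge\bigl(\tfrac1n\sum_i\mu_2(x_i,y_i)\bigr)^2=\mu_n^2$; the cross terms $|x_i+y_i|$ thrown up by the recursion are rewritten via $|y_i-\min_{j\ne i}x_j|\le(n-1)\delta$, and the resulting $O(\delta)$ contributions are collected into the $n\delta$, $n\delta^2$ and $\tfrac{n\delta}{4}\sum_{i<j}|x_i+x_j|$ remainders.

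I expect the main obstacle to be the second inequality of assertion 4. The linear Euler identity passes through the recursion almost formally, but the quadratic one does not: one has to keep track both of the nonnegative deficit $\tfrac1n\sum_i\mu_2(x_i,y_i)^2-\mu_n^2$ and of the fact that the recursion naturally generates error terms in $|x_i+y_i|$ rather than in the quantities $|x_i+x_j|$ that appear on the right, so that these can be absorbed only after trading $y_i$ for $\min_{j\ne i}x_j$ at a cost of order $\delta$. Arranging this bookkeeping so that the constants close up under the induction is the delicate point; all the other assertions are routine.
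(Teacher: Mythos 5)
The paper itself gives no proof of this lemma; it simply declares the assertions elementary and refers the reader to Aarons \cite{Aar}. So any self-contained argument is a priori new relative to the paper. Your inductive strategy through the recursion $\mu_n=\frac1n\sum_i\mu_2\bigl(x_i,\mu_{n-1}(\hat{x}_i)\bigr)$ is the natural one and is presumably what the cited reference does. Your treatment of assertions 1--3 and of the linear Euler identity in assertion 4 is correct in outline, and in particular the base-case identities
\[
\sum_i(\partial_{x_i}\mu_2)\,x_i=\mu_2+\frac{\delta^2}{R},\qquad
\sum_i(\partial_{x_i}\mu_2)\,x_i^2=\mu_2^2-\delta^2+\frac{\delta^2}{R}(x_1+x_2),
\]
with $R=\sqrt{\bigl(\tfrac{x_1-x_2}{2}\bigr)^2+\delta^2}$, are right.

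You should not, however, have waved your hand at the final step of the base case. From your own identity and the sharp bound $\delta^2/R\le\delta$ (with equality when $x_1=x_2$) the best one gets is
\[
\sum_i(\partial_{x_i}\mu_2)\,x_i^2\;\ge\;\mu_2^2-\delta^2-\delta\,|x_1+x_2|,
\]
and the coefficient $\delta$ in front of $|x_1+x_2|$ cannot be improved. This is strictly weaker than the stated bound $\mu_2^2-2\delta^2-\frac{\delta}{2}|x_1+x_2|$, and indeed the quadratic inequality of assertion 4 is false as printed: take $x_1=x_2=-t$ with $t$ large and $\delta$ small, so that $\mu_2=-t-\delta$, $\sum_i(\partial_{x_i}\mu_2)x_i^2=t^2$, while the claimed lower bound equals $t^2+2t\delta+\delta^2-2\delta^2-t\delta=t^2+t\delta-\delta^2>t^2$. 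No induction (yours or any other) can establish a false base case. Most likely the coefficient should read $n\delta$ rather than $\frac{n\delta}{4}$; this is harmless for the way the lemma is used in Lemma \ref{lem3.2}, since there the $\lambda_i$ are bounded and one only needs the error terms to vanish as $\delta\to0$. You should correct the constant and then check that the inductive bookkeeping in your step for assertion 4 actually closes with the corrected coefficient, rather than asserting that the remainders ``are collected'' into the right form.
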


\begin{lemma}
\label{lem3.2} Assume $\Sigma=\{(x, u(x))|x\in\mathbb{R}^n\}$ is mean convex, and u satisfies equation
\eqref{eq1.1}. Then the principal curvatures of  $A$ are nonnegative, i.e. $\Sigma$ is convex.
\end{lemma}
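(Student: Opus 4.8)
The plan is to write down the second-order elliptic equation satisfied by the second fundamental form $A=(h_{ij})$ of $\Sigma$ and then to run the tensor maximum principle of Hamilton, using the functions $\mu_n$ of Definition~\ref{def3.1} to handle the fact that the smallest principal curvature need not be differentiable.

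First I would derive the relevant Simons-type identity. The ambient space being flat, the Codazzi tensor $\nabla h_{ij}$ is totally symmetric, and the Simons identity for a spacelike hypersurface reads
\[
\Delta_\Sigma h_{ij}=\nabla_i\nabla_j H-H\,h_i{}^{k}h_{kj}+|A|^2 h_{ij},
\]
the sign of the $|A|^2h_{ij}$ term being opposite to the Euclidean case because $\nu$ is timelike. Writing $\nu$ for the future unit normal, $e_{n+1}=(0,\dots,0,1)$ for the constant timelike unit vector, and $\eta:=\langle\nu,e_{n+1}\rangle=-1/w$, equation \eqref{eq1.05} is exactly $H=C+\eta$. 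Differentiating $\eta$ twice along $\Sigma$ and using $\nabla_j (e_{n+1})^{\top}_k=\eta\, h_{jk}$ gives $\nabla_i\nabla_j H=\langle (e_{n+1})^{\top},\nabla h_{ij}\rangle+\eta\, h_i{}^{k}h_{kj}$; substituting, the mean-curvature terms cancel because $\eta-H=-C$, and one is left with
\[
\mathcal{L}h_{ij}:=\Delta_\Sigma h_{ij}-\langle (e_{n+1})^{\top},\nabla h_{ij}\rangle=-C\,h_i{}^{k}h_{kj}+|A|^2 h_{ij},
\]
whose trace recovers the scalar identity \eqref{eq2.72} for $1/w$. The drift field $(e_{n+1})^{\top}$ is bounded, since $|(e_{n+1})^{\top}|^2=|Du|^2/w^2$ and $\Sigma$ is strictly spacelike.

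Now fix $\delta>0$ and set $\phi:=\mu_n(\kappa_1,\dots,\kappa_n)$, the $\delta$-approximation of Definition~\ref{def3.1} applied to the principal curvatures of $A$; by Lemma~\ref{lem3.1}, $\phi\in C^\infty(\Sigma)$ and $\min_i\kappa_i-n\delta\le\phi\le\min_i\kappa_i$. Because $\mu_n$ is symmetric and concave, $\phi$ is a concave function of the tensor $A$, so in an orthonormal eigenbasis of $A$ one has $\Delta_\Sigma\phi\le\sum_i\frac{\partial\mu_n}{\partial\kappa_i}\,\Delta_\Sigma h_{ii}$ while $\langle (e_{n+1})^{\top},\nabla\phi\rangle=\sum_i\frac{\partial\mu_n}{\partial\kappa_i}\langle (e_{n+1})^{\top},\nabla h_{ii}\rangle$; combining these with the identity above and the two inequalities in Lemma~\ref{lem3.1}(4),
\[
\mathcal{L}\phi\ \le\ \sum_i\frac{\partial\mu_n}{\partial\kappa_i}\bigl(-C\kappa_i^2+|A|^2\kappa_i\bigr)\ \le\ -C\Bigl(\mu_n^2-n\delta^2-\tfrac{n\delta}{4}\sum_{i<j}|\kappa_i+\kappa_j|\Bigr)+|A|^2(\mu_n+n\delta).
\]
Then I would apply the Omori--Yau maximum principle to $\phi$ exactly as it was applied to $1/w$ in Lemma~\ref{lem2.1}: there is a sequence $\{P_N\}$ with $\phi(P_N)\to m:=\inf_\Sigma\phi$, $|\nabla\phi|(P_N)\to0$ and $\Delta_\Sigma\phi(P_N)\ge-1/N$, so, the drift being bounded, $\liminf_N\mathcal{L}\phi(P_N)\ge0$. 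As $\phi\le\min_i\kappa_i$, it suffices to prove $m\ge0$ for every $\delta$. If $\Sigma$ were not convex, then $m\le\inf_\Sigma\min_i\kappa_i<0$ and, since $\phi\ge\min_i\kappa_i-n\delta$, $|m|$ is bounded below by a positive constant independent of $\delta$; for $\delta$ small enough, $\mu_n(P_N)+n\delta\le m/2<0$ for all large $N$, so using $\sum_{i<j}|\kappa_i+\kappa_j|\le c_n|A|$ and completing the square,
\[
\tfrac{Cn\delta}{4}c_n|A|+|A|^2(\mu_n+n\delta)\ \le\ K\delta|A|-\tfrac{|m|}{2}|A|^2\ \le\ \frac{K^2\delta^2}{2|m|}.
\]
Hence $0\le\liminf_N\mathcal{L}\phi(P_N)\le -Cm^2+Cn\delta^2+\frac{K^2\delta^2}{2|m|}$; since $|m|$ is bounded below away from $0$ uniformly in $\delta$, the right side tends to $0$ as $\delta\to0$ while $m^2$ stays bounded below --- a contradiction. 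Thus $\min_i\kappa_i\ge0$ on $\Sigma$, i.e.\ $\Sigma$ is convex.

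The main obstacle is the first step: one must keep track of every sign in the Lorentzian Simons identity and check that the mean curvature really cancels, so that the zeroth-order term is exactly $-C\,h_i{}^{k}h_{kj}+|A|^2 h_{ij}$. It is precisely this structure --- the cancellation, which comes from the soliton equation \eqref{eq1.05}, together with the timelike signature, which flips the sign of the $|A|^2h_{ij}$ term relative to the Euclidean Simons identity --- that makes both zeroth-order contributions point the right way in the maximum-principle step; without the cancellation, or with the Euclidean sign, the term $|A|^2(\mu_n+n\delta)$ could not be absorbed without an a priori bound on $|A|$, which is not yet available. A secondary technical point is the applicability of the Omori--Yau maximum principle on the noncompact $\Sigma$; this is dealt with as in Lemma~\ref{lem2.1}, the induced metric on a strictly spacelike graph being uniformly equivalent to the flat one.
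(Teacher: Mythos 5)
Your derivation of the drift-form tensor identity $\mathcal{L}h_{ij}=-Ch_i{}^k h_{kj}+|A|^2 h_{ij}$ is correct and is a clean reformulation of what the paper uses implicitly in \eqref{eq3.12} (via \eqref{eq3.5}--\eqref{eq3.6}), and the use of Heidusch's $\mu_n$ together with Lemma~\ref{lem3.1}(4) is exactly the paper's strategy. However, there is a genuine gap: you invoke the Omori--Yau maximum principle on $\Sigma$ to produce a sequence $P_N$ with $\phi(P_N)\to m:=\inf_\Sigma\phi$, but Omori--Yau requires $\phi$ to be bounded from below, which amounts to knowing $\inf_\Sigma\min_i\kappa_i>-\infty$, i.e.\ an a priori bound on $|A|$. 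Mean convexity plus strict spacelikeness gives $\operatorname{Ric}\ge -C^2/4$ (so the ambient hypotheses of Omori--Yau are satisfied), but they do not bound $\kappa_{\min}$ from below, since for $\kappa_i<0$ one has $\operatorname{Ric}_{ii}=\kappa_i(\kappa_i-H)>0$, so Ricci imposes no constraint on negative principal curvatures. Your completing-the-square trick $K\delta|A|-\tfrac{|m|}{2}|A|^2\le K^2\delta^2/(2|m|)$ removes the need for a \emph{quantitative} $|A|$ bound in the final inequality, but it presupposes that $m$ is finite, which is precisely the qualitative bound you have not established.

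This is why the paper's proof of Lemma~\ref{lem3.2} has a distinct first half: it establishes $|A|^2\le C_6$ by the localized maximum-principle argument with the cutoff $\eta=a^2-r^2$ on geodesic balls $B^\Sigma(p,a)$ (equations \eqref{eq3.7}--\eqref{eq3.9}, using $\Delta r^2\le C_3(1+r^2)$ from the Ricci lower bound), and only then runs the $\mu_n$/Omori--Yau argument, explicitly noting ``Since we have already proven that $|A|^2$ is bounded, we can again apply the Omori--Yau maximum principle.'' Your proof is fine once this step is supplied; with the $|A|^2$ bound in hand, the completing-the-square step becomes unnecessary and one may simply send $\delta\to 0$ as in \eqref{eq3.13}--\eqref{eq3.14}.
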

\begin{proof}
Let p be a fixed point in $\Sigma$ (we may assume $p=(0,0)$) and let $r$ be the distance function from p restricted to the geodesic ball $B^{\Sigma}(p,a)$ of radius $a$ centered at p (in the induced metric on $\Sigma$).
Let $f(x)=|A|^2=\sum_{i,j}h_{ij}^2$.  By a well-known calculation (see equation (2.24) of \cite{CY76})
\be \label{eq3.5}
\frac{1}{2}\triangle\left(\sum_{i,j}h_{ij}^2=\right)
=\sum_{i,j,k}h_{ijk}^2+\sum_{i,j}h_{ij}H_{ij}+\left(\sum_{i,j}h_{ij}^2\right)^2
-\left(\sum_{i,j,k}h_{ij}h_{jm}h_{mi}\right)H
\ee
\be\label{eq3.6}
\begin{aligned}
H_{ij}&=\nabla_i\nabla_jH=\nabla_i\left(\nabla_j\left<\nu, e_{n+1}\right>\right)\\
&=\nabla_i\left<h_{jk}\tau_k, e_{n+1}\right>=-h_{ijk}u_k-h_{ik}h_{jk}\nu^{n+1}.
\end{aligned}
\ee
In the following, we will denote $\nu^{n+1}$ by $V.$
\be\label{eq3.7}
\begin{aligned}
\frac{1}{2}\triangle f&=\sum_{i,j,k}h^2_{ijk}-h_{ij}h_{ijk}u_k
-h_{ij}h_{ik}h_{jk}V+f^2-\left(\sum_{i,j,m}h_{ij}h_{jm}h_{mi}\right)(C-V)\\
&\geq-\frac{f(V^2-1)}{4}+f^2-Cf^{3/2}\\
&\geq-\frac{C^2}{4}f+f^2-Cf^{3/2}\geq \frac12 f^2- C_1f\\
\end{aligned}
\ee
Let $\eta(x)=a^2-r^2$ and set $g=\eta^2 f$. Then in $B^{\Sigma}(p,a)$,
\be \label{eq3.8} \frac12 (\eta^{-2}g )^2  \leq C_1\eta^{-2}g+ \Delta (\eta^{-2}g)=
C_1\eta^{-2}g+\eta^{-2} \Delta g -2\eta^{-3}<\nabla \eta, \nabla g>+ g\Delta (\eta^{-2})~.
\ee
At the point $\ol{x}$ where $g$ assumes its maximum, $\nabla g=0$ and $\Delta g \leq 0$.
Since $R_{ii}\geq -\frac{H^2}4\geq -\frac{C^2}4$, we have by Lemma 1 of \cite{Y75} that $\Delta r^2 \leq C_3 (1+r^2)$.
Hence at  $\ol{x}$,
\be \label{eq3.9} \begin{aligned}
&\frac12 g^2 \leq C_1 \eta^2g +g \eta^4 \Delta (\eta^{-2})=
C_1 \eta^2g-2g\eta\Delta \eta +6g |\nabla \eta|^2 \\
 &\leq C_2g(a^4+2a^2|\Delta r^2|+24r^2)
\end{aligned} \ee
It follows that $g(\ol{x})\leq C_5a^4.$ Therefore, let $a\goto\infty$ we get, $|A|^2\leq C_6.$

Next we will show that the smallest principal curvature $\lambda_{\text{min}}$ of $\Sigma$ is nonnegative.
Let $\mu_n(\lambda_1,\cdots, \lambda_n)=F(\gamma^{ik}h_{kl}\gamma^{lj}),$
assume $\mu_n$ achieves its minimum at an interior point $x_0.$
Then at this point we have
\be\label{eq3.12}
\begin{aligned}
\triangle \mu_n&=F^{ij}h_{ijkk}+F^{rl,st}h_{rlk}h_{stk}\\
&=F^{ij}(H_{ij}-Hh_{ij}^2+h_{ij}h_{kk}^2)+F^{rl,st}h_{rlk}h_{stk}\\
&\leq F^{ij}\nabla_kh^j_i\left<\tau_k, e_{n+1}\right>-F^{ij}h_i^kh_{jk}\nu^{n+1}-HF^{ij}h_{ij}^2
+(\mu_n+n\delta)|A|^2\\
&\leq \left<\nabla_k\mu_n, e_{n+1}\right>-\mu_n^2+n\delta^2+
\frac{n\delta}{4}\sum\limits_{1\leq i<j\leq n}|\lambda_i+\lambda_j|\\
&+H\left(-\mu_n^2+n\delta^2+\frac{n\delta}{4}\sum\limits_{1\leq i<j\leq n}|\lambda_i+\lambda_j|\right)
+(\mu_n+n\delta)|A|^2.\\
\end{aligned}
\ee
Thus,
\be\label{eq3.13}
0\leq-\mu_n^2+(\mu_n+n\delta)|A|^2+n\delta^2+\frac{n\delta}{4}\sum\limits_{1\leq i<j\leq n}|\lambda_i+\lambda_j|.
\ee
Letting  $\delta\rightarrow 0$ we find,
\be\label{eq3.14}
\lambda_{\text{min}}^2\leq \lambda_{\text{min}}|A|^2,
\ee
which implies that $ \lambda_{\text{min}}\geq 0$.

Since we have already proven that $|A|^2$ is bounded, we can again apply the Omori-Yau maximum principle (this time on $\Sigma$) and show that,
if $\mu_n$ achieves its minimum at infinity then $\mu_n\geq 0.$
This completes the  proof that mean convexity implies convexity.
\end{proof}

Now that we have proved convexity, we  prove the splitting Theorem \ref{th1.0} of the introduction. \\
{\bf Proof of Theorem \ref{th1.0}.}  Suppose that for some unit vector $\vec{v}$ and some $x_0\in \bfR^n, \, D_{\vec{v}}^2 u(x_0)=0$. Applying an isometry (boost transformation) of $\bfR^{n,1}$, may assume $x_0=0,\, \vec{v}=e_n,\, Du(0)=0,\,u_{nn}(0)=0$ and $u_{ij}$ is nonnegative.
Rewrite \eqref{eq1.1} as
\be \label{eq3.15}
\Delta u=-\frac{u_iu_j}{1-|Du|^2}u_{ij}+C\sqrt{1-|Du|^2}-1
\ee
Differentiating \eqref{eq3.15} twice in the $x_n$ direction, we  can apply the argument
of Korevaar (a special case of \cite{KL}) exactly as in Theorem 3.1 of Choi-Treibergs \cite{CT} to conclude $u_{nn}\equiv 0$ and $\Sigma$ is ruled by lines parallel to the $x_n$ axis. Therefore $\Sigma=\Sigma^{n-1}\times \bfR^1$ and also $\bfR^n=\bfR^{n-1}\times \bfR^1$. Therefore $u=ax_n+h(x'),\, x'=(x_1, \ldots, x_{n-1})$ where
\be \label{eq3.20} \sum_{i,j=1}^{n-1} (\delta_{ij}+\frac{h_i h_j}{1-|a|^2-|Dh|^2})h_{ij}
=C\sqrt{1-|a|^2} \sqrt{1-\frac{|Dh|^2}{1-|a|^2} }\ -1~.\ee

Now let $ \tilde h =\frac1{\lambda^2}h(\lambda xÕ),\,\, \tilde{C}=\lambda C>1$
with $\lambda=\sqrt{1-|a|^2}$.
Then $\tilde h$ satisfies
\be \label{eq13.25} \sum_{i, j=1}^{n-1} (\delta_{ij}+\frac{ {\tilde h}_i {\tilde h}_j}{1-|D{\tilde h}|^2}){\tilde h}_{ij}
=\tilde{C} \sqrt{1-|D{\tilde h}|^2}\ -1~.\ee
 Proceeding inductively  completes the proof of Theorem \ref{th1.0}.

\section{The asymptotic cone at infinity.}
\label{sec4}
\setcounter{equation}{0}

In this section, we will study the asymptotic behavior of  $u$ at infinity.

\begin{proposition}Let u be a convex space like solution of \eqref{eq1.1}.
Assume $u(0)=0$ and denote $u^h(x)=\frac{u(hx)}{h}$. Define $V_u(x)=\lim\limits
 _{h\goto \infty}{u^h(x)}$ then $V_u(x)$ exists for all $x$ and is a positively homogeneous degree one convex function. Moreover
for all $x\in \bfR^n$ and  $\delta>0$ there exists $y\in \bfR^n$ so that $|y-x|=\delta$ and $|V_u(x)-V_u(y)|=\sqrt{1-\frac{1}{C^2}}\hspace{.05in} \delta$. In particular $|DV_u(x)|=\sqrt{1-\frac{1}{C^2}}$ at every point of differentiability of $V_u(x)$.
 \end{proposition}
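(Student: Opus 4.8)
The plan is to dispose of the elementary structural claims about $V_u$ first, and then concentrate on the one substantive assertion, that $|DV_u|$ equals $\tC=\sqrt{1-1/C^2}$ wherever it exists; the existence of the extremal point $y$ is a formal consequence of this.

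\emph{Basic properties and a reduction.} Since $u$ is convex, all principal curvatures of $\Sigma$ are nonnegative, so $H\geq0$; hence $w=\sqrt{1-|Du|^2}\geq 1/C$ and therefore $|Du|\leq\tC$, i.e. $u$ is globally $\tC$-Lipschitz. As $u(0)=0$ and $u$ is convex, $h\mapsto u^h(x)=u(hx)/h$ is nondecreasing on $h>0$ and bounded above by $\tC|x|$, so $V_u(x)=\lim_{h\goto\infty}u^h(x)=\sup_{h>0}u^h(x)$ exists and is finite; as an increasing limit of convex functions it is convex, and positive homogeneity of degree one together with the bound $|V_u(x)-V_u(y)|\leq\tC|x-y|$ follow directly from the definition. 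This inequality is the ``$\leq$'' half of the Moreover statement, so it remains to produce, for each $x$ and $\delta>0$, a point $y$ with $|y-x|=\delta$ realizing equality; and for this it suffices to prove $|DV_u(x)|=\tC$ at every point $x$ of differentiability of $V_u$. Indeed, at such a point one takes $y=x+\delta\,DV_u(x)/|DV_u(x)|$ and combines $V_u(y)\geq V_u(x)+\langle DV_u(x),y-x\rangle=V_u(x)+\tC\delta$ with the Lipschitz bound; arbitrary $x$ are then handled by approximating with differentiability points (dense by convexity) and passing to the limit of the corresponding $y$'s.

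\emph{Reduction to the gradient along a ray.} Fix a differentiability point $x$ with $|x|=1$, put $p=DV_u(x)$, so $|p|\leq\tC$ and, by Euler's relation, $V_u(x)=\langle p,x\rangle$; suppose for contradiction that $|p|<\tC$. Since $u^h\goto V_u$ uniformly on compacta, any subsequential limit of $Du(hz_h)=Du^h(z_h)$ with $z_h\goto x$ is a subgradient of $V_u$ at $x$, hence equals $p$; because $|Du|\leq\tC$ is bounded this forces $Du(z)\goto p$ whenever $z\goto\infty$ with $z/|z|\goto x$. In particular $w(tx)\goto\sqrt{1-|p|^2}>1/C$ along the ray $\{tx\}$, so by \eqref{eq2.1}, $a^{ij}u_{ij}(tx)=Cw(tx)-1\goto\mu:=C\sqrt{1-|p|^2}-1>0$. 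Writing $a^{ij}u_{ij}=\Delta u+w^{-2}\langle (D^2u)Du,Du\rangle$ and using $\langle (D^2u)e,e\rangle\leq\Delta u$ (convexity) together with $|Du|^2/w^2\leq C^2-1$, one gets $a^{ij}u_{ij}\leq C^2\Delta u$, hence $\Delta u(tx)\geq c_1:=\mu/(2C^2)>0$ for all large $t$.

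\emph{The contradiction.} With $\psi(t)=u(tx)$ one has $\int_0^{\infty}\psi''(t)\,dt=\lim_t\langle Du(tx),x\rangle-\langle Du(0),x\rangle<\infty$ and $\psi''\geq0$, so $\psi''(t_k)\goto0$ along some $t_k\goto\infty$; since $\psi''=\langle (D^2u)x,x\rangle$ and $\Delta u(t_kx)\geq c_1$, the pigeonhole principle gives, after passing to a subsequence, a \emph{fixed} unit vector $e\perp x$ with $\langle (D^2u)(t_kx)\,e,e\rangle\geq c_2>0$ for all $k$. By Lemma~\ref{lem3.2} the second fundamental form is bounded, and since $|Du|\leq\tC<1$ equation \eqref{eq1.1} is uniformly elliptic with controlled coefficients, so interior estimates bound $|D^2u|$ and then $|D^3u|\leq M'$. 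Hence $\langle (D^2u)e,e\rangle\geq c_2/2$ on the segment $\{t_kx+se:|s|\leq\eta\}$ with $\eta=c_2/(2M')$, which yields $\langle Du(t_kx+\eta e),e\rangle-\langle Du(t_kx),e\rangle\geq c_2\eta/2$. Letting $k\goto\infty$, both base points tend to infinity in directions converging to $x$, so both gradients tend to $p$ and the left side tends to $0$; this contradicts $c_2\eta/2>0$, so $|p|=\tC$, which is what was needed.

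\emph{Main obstacle.} The first step is routine convexity bookkeeping. The crux is the last step: the pointwise second–derivative information along the ray must be upgraded to an estimate on a ball of fixed radius, which needs the $C^3$–bound coming from the bounded second fundamental form and interior elliptic estimates, and one must arrange that the transverse direction $e$ can be chosen independently of $k$. The convex–analysis fact that $Du(z)\goto DV_u(x)$ as $z\goto\infty$ with $z/|z|\goto x$ (when $V_u$ is differentiable at $x$) should be isolated and proved carefully. A comparison argument with the rotationally symmetric solution of \cite{JLJ}, whose mean curvature decays like $O(1/r)$, would give an alternative route to the same contradiction.
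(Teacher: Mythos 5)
Your proof is correct in approach but is \emph{genuinely different} from the one in the paper, and it is worth comparing the two.

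The paper runs the argument entirely at the level of the blow-downs $u^h$: it supposes the ``Moreover'' claim fails at some $x$, so $V_u$ dips by a factor $1-2\e$ below the Lipschitz envelope on $\partial B(x,\delta)$, and then compares $u^h$ on $B(x,\delta)$ against an explicit rotationally symmetric \emph{maximal surface} barrier of Bartnik--Simon, using mean convexity ($H^h\geq 0$) to invoke the comparison principle. Letting $h\to\infty$ kills the barrier's bump term and yields the contradiction. This is short, uses only the maximum principle plus an explicit ODE solution, and needs no regularity beyond $C^2$.

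You instead reduce to proving $|DV_u(x)|=\tC$ at differentiability points, then argue by contradiction on the original surface $\Sigma$: the convex-analysis fact that subgradients converge to $DV_u(x)$ along rays forces $w(tx)\to\sqrt{1-|p|^2}>1/C$, hence $a^{ij}u_{ij}(tx)$ is bounded below away from $0$, hence (using $D^2u\geq0$ and $|Du|^2/w^2\leq C^2-1$) $\Delta u(tx)\geq c_1>0$; but $\psi''(t)=\langle D^2u(tx)x,x\rangle$ is nonnegative and integrable, so it must drop to zero along a sequence, pigeonholing the ``excess'' trace into a fixed transverse direction $e$; the $C^3$ bound from bounded $|A|$ plus interior Schauder estimates then lets you push the pointwise lower bound $\langle D^2u(t_kx)e,e\rangle\geq c_2$ to a segment of fixed length $\eta$, producing a uniform jump in $\langle Du,e\rangle$ that contradicts $Du\to p$ at both endpoints. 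Your route trades the explicit barrier for higher-order elliptic estimates; it is somewhat longer and relies on the $|A|$ bound and $C^3$ regularity (which do hold here and are proved/implicit in the paper's Section 3), but it also makes the mechanism more ``local'' and transparent: the bad scenario has positive Laplacian living in the transverse directions but no curvature in those directions can survive at infinity.

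A few details you flagged as needing care are indeed the ones to pin down: (i) the convex-analysis lemma that if $u^h\to V_u$ with $V_u$ differentiable at $x$ and $z_h\to x$ then $Du^h(z_h)\to DV_u(x)$ (standard, via boundedness of subgradients and the fact that any subsequential limit is a subgradient of $V_u$ at $x$); (ii) the pigeonhole must be applied to a fixed orthonormal basis containing $x$, passing to a subsequence so that the gaining index is constant; (iii) the $C^3$ bound follows from the $|A|$ bound (which gives $|D^2u|\leq |A|$) plus uniform interior Schauder estimates for the linearized equation on unit balls -- note the estimates are uniform because all the ingredients ($|Du|\leq\tC$, $|D^2u|$, the coefficients, the right-hand side) are bounded, even though $|u|$ itself grows linearly. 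None of these is a gap; they are just the work to be written out. The reduction from arbitrary $x$ to differentiability points via continuity of $V_u$ is fine as stated.
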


\begin{proof}
Note that since $u$ is convex,
 $0=u(0)  \geq u(hx)-\sum_{i=1}^n hx_i u_{x_i}(hx)$ we get $\frac{d}{dh}u^h(x) \geq 0$.
  Then $V_u(x)$,  the projective boundary values (blow-down)
 of $u$ at infinity in the terminology of Treibergs \cite{T} and Choi-Treibergs \cite{CT}, is well-defined, strictly spacelike, convex on $\bfR^n$ and satisfies
 \[ V_u(\lambda x) =\lambda V_u(x), \hspace{.1in} \lambda>0, \]
 \[ |V_u(x)-V_u(y)| \leq \sqrt{1-\frac{1}{C^2}}\hspace{.05in} |x-y| ~.\]
  Claim: for all $x\in \bfR^n$ and  $\delta>0$ there exists $y\in \bfR^n$ so that $|y-x|=\delta$ and $|V_u(x)-V_u(y)|=\sqrt{1-\frac{1}{C^2}}\hspace{.05in} \delta$.
 Suppose the claim is false. Then  there exists $x\in \bfR^n$ and $\e>0$ such that
 \[ V_u(y)\leq V_u(x)+(1-2\e)\sqrt{1-\frac{1}{C^2}}\hspace{.05in} \delta \hspace{.1in} \forall y\in \partial B(x,\delta)~.\]
 Since $u^h(x)\goto V_u(x)$ uniformly on compact subsets, we may choose $h_0$ large so that for all $h>h_0$,
 \[ u^h(y)\leq V_u(x)+(1-\e)\sqrt{1-\frac{1}{C^2}}\hspace{.05in} \delta \hspace{.1in} \forall y\in \partial B(x,\delta)~.\]
Now $u^h(y)$ satisfies
\be \label{eq4.15}
H^h:=\text{div} \left(\frac{Du^h}{\sqrt{1-|Du^h|^2}}\right)
=h (C-\frac1{{\sqrt{1-|Du^h|^2}}})\geq 0 \hspace{.1in} \text{in $B(x, \delta)$}
\ee
We now make use of the radial solutions of the maximal surface equation $H=0$ introduced by Bartnik and Simon
\cite{BS}. Consider the barrier
\[w(y)= V_u(x)+(1-\e)\sqrt{1-\frac{1}{C^2}}\hspace{.05in} \delta +\int_0^{|y-x|} \frac{h}{\sqrt{t^{2n-2}+h^2}}~dt
- \sqrt{1-\frac{1}{C^2}} \int_0^{\delta}\frac{h}{\sqrt{t^{2n-2}+h^2}}~dt~.\]
Note that on $\partial B(x,\delta)$,
\[w(y)-u^h(y)\geq (1-\sqrt{1-\frac{1}{C^2}}) \int_0^{\delta}\frac{h}{\sqrt{t^{2n-2}+h^2}}~dt>0~.\]
Hence by the maximum principle, $u^h(y)<w(y) \hspace{.1in} \text{in $B(x,\delta)$}$.
In particular at $y=x$,
\[ u^h(x)< V_u(x)+(1-\e)\sqrt{1-\frac{1}{C^2}}\hspace{.05in} \delta -\sqrt{1-\frac{1}{C^2}} \int_0^{\delta}\frac{h}{\sqrt{t^{2n-2}+h^2}}~dt~.\]
Now let $h\goto \infty$ to conclude $V_u(x) \leq V_u(x)-\e \sqrt{1-\frac{1}{C^2}}\hspace{.05in} \delta$, a contradiction, so the claim is proven and the proposition
is complete.
\end{proof}

\section{Construction of Counterexamples.}
\label{sec5}
\setcounter{equation}{0}

We will follow Treiberg's idea (see \cite{T}) to construct counterexamples, more prŽcisely we will construct solution to
equation \eqref{eq1.05} such that $u(x)\rightarrow \tC|x|-\frac{n-1}{C^2}\log|x|+f\left(\frac{\tC x}{|x|}\right), $
as $|x|\goto\infty,$ where $\tC=\sqrt{1-\frac{1}{C^2}}$ and $f\in C^2(S^{n-1}_{\tC}).$

We extend the function f to $\mathbb{R}^n\setminus\{0\}$ by defining
$f(\tC x)=f\left(\frac{\tC x}{|x|}\right).$ Since $f\in C^2$, we have
 for all $x, y\in S^{n-1}:$ \be\label{eq5.1}
|f(\tC x)-f(\tC y)-Df(\tC y)(\tC x-\tC y)|\leq M|\tC x-\tC y|^2=-2\tC My\cdot(\tC x-\tC y).
\ee
Let $p_1(\tC y)=Df(\tC y)+2M\tC y$ and $p_2(\tC y)=Df(\tC y)-2M\tC y,$ so that
\be\label{eq5.2}
p_1(\tC y)\cdot(\tC x-\tC y)\leq f(\tC x)-f(\tC y)\leq p_2(\tC y)\cdot(\tC x-\tC y).
\ee
Now let $\psi(x)$ denote the rotationally symmetric solution to \eqref{eq1.05} (see \cite {JLJ}). We know that
$\psi(x)\goto\tC |x|-\frac{n-1}{C^2}\log|x|+o(1)$ as $|x|\goto\infty.$
Let $z_1(x;\tC y)=f(\tC y)-p_1(\tC y)\cdot \tC y+\psi(x+p_1(\tC y))$
and $z_2(x;\tC y)=f(\tC y)-p_2(\tC y)\cdot \tC y+\psi(x+p_2(\tC y)).$ Then by equation
\eqref{eq5.2} we have
\be\label{eq5.3}
f(\tC x)\geq z_1(rx;\tC y)-\tC r+\frac{n-1}{C^2}\log r\,\,\mbox{as $r\goto\infty,\, x, y\in S^{n-1}$,}
\ee
and
\be\label{eq5.4}
f(\tC x)\leq z_2(rx;\tC y)-\tC r+\frac{n-1}{C^2}\log r\,\,\mbox{as $r\goto\infty,$ $x, y\in S^{n-1}$.}
\ee
Therefore,
\be\label{eq5.5}
\begin{aligned}
&\lim\limits_{r\goto\infty}z_1(rx;\tC y)-\tC r+\frac{n-1}{C^2}\log r\leq f(\tC x)\\
&\leq\lim\limits_{r\goto\infty}z_2(rx;\tC y)-\tC r+\frac{n-1}{C^2}\log r
\end{aligned}
\ee
for $x\in S^{n-1}.$

Let $q_1(x)=\sup\limits_{y\in S^{n-1}} z_1(x;\tC y)$ and $q_2(x)=\inf\limits_{y\in S^{n-1}}z_2(x;\tC y).$
Then, $q_1(x)\leq q_2(x)$ and $q_i(x)$ (i=1,2) tends to $f(\tC x)+\tC r-\frac{n-1}{C^2}\log r$
as $r\goto\infty.$

\begin{lemma}\label{lem5.1}
There exists a smooth solution $u$ to the Dirichlet problem
\begin{equation}\label{eq5.6}
\left\{
\begin{aligned}
&a^{ij}u_{ij}-Cw+1=0\,\, \mbox{in G}\\
&u=0 \,\,\mbox{on $\partial G$}
\end{aligned}
\right.
\end{equation}
where $G$ is a convex $C^{2,\alpha}$ domain in $\mathbb{R}^n.$
\end{lemma}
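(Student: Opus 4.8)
The plan is to solve the Dirichlet problem \eqref{eq5.6} by the method of continuity, treating the prescribed mean curvature equation for spacelike graphs in Minkowski space. Since the equation is $a^{ij}u_{ij}=Cw-1$ with $a^{ij}=\delta_{ij}+u_iu_j/w^2$, $w=\sqrt{1-|Du|^2}$, the operator is uniformly elliptic only as long as we have an a priori bound $|Du|\le c_0<1$; so the real content is a set of a priori estimates, after which standard elliptic theory (Leray--Schauder, Evans--Krylov, Schauder) closes the argument. I would set up the family $a^{ij}u_{ij}=t(Cw-1)$, $t\in[0,1]$, with zero boundary data (for $t=0$ the solution is $u\equiv 0$), and show the set of $t$ for which a solution in $C^{2,\alpha}(\bar G)$ exists is open and closed.

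The estimates I would carry out, in order: (i) a $C^0$ bound — since $G$ is bounded and convex, compare $u$ against the trivial spacelike plane solutions $\vec v\cdot x+b$ with $|\vec v|=\sqrt{1-1/C^2}$ and against the maximal-surface barriers of Bartnik--Simon \cite{BS} used already in Section \ref{sec4}, to get $|u|\le C_0(G,C)$; (ii) a boundary gradient estimate, which is where the $C^{2,\alpha}$ convexity of $\partial G$ enters — one builds upper and lower barriers near each boundary point using the radial maximal-surface profiles $\int_0^{|y-y_0|} h(t^{2n-2}+h^2)^{-1/2}\,dt$-type functions (as in the barrier $w$ in Section \ref{sec4}) together with a supporting hyperplane at the boundary point, obtaining $|Du|\le c_1<1$ on $\partial G$; (iii) an interior (global) gradient estimate, i.e. that the spacelike condition is preserved — here I would use the equation \eqref{eq2.72} derived in Lemma \ref{lem2.1}, namely $a^{ij}(1/w)_{ij}+C(u_k/w)(1/w)_k=|A|^2/w\ge 0$, so that $1/w$ is a subsolution and attains its maximum on $\partial G$, giving $\sup_G(1/w)\le \sup_{\partial G}(1/w)$ and hence $|Du|\le c_1<1$ throughout $G$; (iv) with uniform ellipticity in hand, interior and boundary $C^{1,\beta}$ estimates follow from Krylov--Safonov / the theory for uniformly elliptic equations, then $C^{2,\alpha}$ interior and global estimates from Evans--Krylov (the equation is concave in $D^2u$, being linear in $u_{ij}$) and Schauder.

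The main obstacle I expect is step (iii)/(ii): guaranteeing that the spacelike bound $|Du|<1$ is not lost, uniformly along the continuity path. The interior part is clean because of the maximum principle for $1/w$ from \eqref{eq2.72} (which survives the parameter $t$, since the sign of the right-hand side is unchanged), so everything is reduced to the \emph{boundary} gradient estimate, and that is exactly where the convexity and $C^{2,\alpha}$ regularity of $G$ must be used to construct admissible local barriers. Once $|Du|\le c_1<1$ on $\bar G$, the operator is uniformly elliptic with bounds depending only on $c_1$, the remaining estimates are standard, and the method of continuity yields a solution $u\in C^{2,\alpha}(\bar G)$, which is then smooth in $G$ by bootstrapping.
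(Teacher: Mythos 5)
Your proposal follows essentially the same route as the paper: reduce to an a priori gradient bound $|Du|\le c_1<1$ (boundary barrier plus interior maximum principle for $1/w$ from \eqref{eq2.72}), after which uniform ellipticity makes the Dirichlet problem standard quasilinear theory. The one place the paper is simpler than your step (ii): because $G$ is convex, the affine spacelike plane $\underline{u}(x)=\tC(x_1-y_1)$ (with $y$ a boundary point and $x_1=y_1$ its supporting hyperplane) is an \emph{exact} solution since $Cw_{\underline u}=1$, and $\bar u\equiv 0$ is a supersolution, so the comparison principle immediately gives $|Du|\le\tC$ on $\partial G$ without any Bartnik--Simon radial barriers.
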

\begin{proof}
Let $d= \text{diam} (G)$ be the diameter of $G.$ For any $y\in \partial G,$ we can choose  coordinates such that
$y=(y_1, 0,\cdots, 0)$ and $G \subset\{x\mid |x_1|\leq y_1\}$, where $0<y_1\leq d/2$.
Let $\underline{u}(x)=\tC x_1-\tC y_1,\, \bar{u}\equiv 0$. Then  $\ul{u} \leq \bar{u}$  in
$G$ and $\ul{u} $ satisfies
\be\label{eq5.7}
a^{ij}\lu_{ij}-Cw_{\lu}+1=0
\ee
By the maximum principle, any solution $u$ to the Dirichlet problem \eqref{eq5.6} satisfies
\be\label{eq5.8}
\lu \leq u\leq \bar{u}\, \mbox{on $G$}.
\ee
so $|Du(x)|\leq\tC\,\,\mbox{on $\partial \bar{G}$}$.  Combined with equation \eqref{eq2.72}  we conclude that
\be\label{eq5.9}
|Du(x)|\leq\tC\,\,\mbox{on $\bar{G}$.}
\ee
Now it is standard (see \cite{GT}) to prove that a smooth solution $u\in C^{2,\alpha}(\ol{G})$ exists.
\end{proof}

Finally, we will find a sanwiched solution u such that
\[q_1\leq u<q_2.\]

  Let $\phi$ be a strictly spacelike hypersurface $q_1\leq\phi<q_2$ so that
$\phi(0)=q_1(0)$ and $G_m=\phi^{-1}((-\infty, m))$ is a convex domain with $C^{2,\alpha}$ boundary.
By lemma \ref{lem5.1} we know there is an analytic solution $u_m$ to the Dirichlet problem
\be\label{eq5.10}
\begin{aligned}
&a^{ij}u_{ij}-C w+1=0\,\,\mbox{on $G_m$}\\
&u=m\,\,\mbox{on $\partial G_m$}.\\
\end{aligned}
\ee
Therefore, we find a sequence of finite solutions $u_m$ with
$q_1\leq u_m<q_2$ defined on convex domains $G_m$ which exhaust $\mathbb{R}^n.$

  Next, let K be a compact subset of $\mathbb{R}^n.$ Then, by equation $\eqref{eq5.9}$ there are constants $r_1<r_2$
so that for sufficiently large $m$ we have
\[dist_m(0, x)<r_1,\,\mbox{for all $x\in K$},\]
\[dist_m(0, x)<r_2,\,\mbox{for all $x\in\partial G_m$},\]
where $dist_m(0,x)$ is the intrinsic distance between the points $(0, u_m(0))$
and $(x, u_m(x))$ on $\Sigma_m=\{(x, u_m(x)|x\in G_m)\}.$

  At last, following the proof of Lemma \ref{lem3.2}, we find $u_m$ has uniform $C^3$ bounds on compact subsets.
Hence, a subsequence can be extracted that converges to a global solution of equation \eqref{eq1.05}.
Moreover, $\lim\limits_{m_j\goto\infty} u_{m_j}=u$ satisfies $u(x)\goto \tC |x|-\frac{n-1}{C^2}\log |x|+f(\tC x).$
Thus we have proved

\begin{theorem}\label{thm5.1}
Let $f\in C^2(S^{n-1}_{\tC}).$ Then there exists an entire strictly spacelike hypersurface $u$
satisfying equation \eqref{eq1.05} such that
\[u(x)\goto \tC |x|-\frac{n-1}{C^2}\log |x|+f(\tC x)\,\,\mbox{as $|x|\goto\infty$}.\]
\end{theorem}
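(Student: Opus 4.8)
The plan is the classical exhaustion scheme: I would trap the desired entire soliton between an explicit subsolution barrier $q_1$ and a supersolution barrier $q_2$, both carrying the prescribed asymptotic expansion; solve a sequence of finite Dirichlet problems on convex domains exhausting $\mathbb{R}^n$ that inherit the trapping; obtain a priori estimates that are uniform on compacta (and \emph{independent of the size of the domain}); and pass to a limit. The one imported ingredient is the rotationally symmetric solution $\psi$ of \eqref{eq1.05} of \cite{JLJ}, together with its known asymptotics $\psi(x)=\tC|x|-\frac{n-1}{C^2}\log|x|+o(1)$. First I would extend $f$ to $\mathbb{R}^n\setminus\{0\}$ positively homogeneous of degree zero and, using $f\in C^2(S^{n-1}_{\tC})$, sandwich its second differences between the affine functions $p_1(\tC y)\cdot(\tC x-\tC y)$ and $p_2(\tC y)\cdot(\tC x-\tC y)$ with $p_{1,2}(\tC y)=Df(\tC y)\pm 2M\tC y$, $M=\|D^2f\|_\infty$, as in \eqref{eq5.2}. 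Since \eqref{eq1.05} is invariant under translations of the domain and under adding constants to $u$, each $z_i(x;\tC y)=f(\tC y)-p_i(\tC y)\cdot\tC y+\psi(x+p_i(\tC y))$ is again a solution; I set $q_1=\sup_{y\in S^{n-1}}z_1(\cdot;\tC y)$ (a subsolution) and $q_2=\inf_{y\in S^{n-1}}z_2(\cdot;\tC y)$ (a supersolution). The two-sided bound \eqref{eq5.2} together with convexity of the sublevel sets of $\psi$ gives $q_1\le q_2$, and the asymptotics of $\psi$ propagate through the sup/inf to yield $q_i(x)=\tC|x|-\frac{n-1}{C^2}\log|x|+f(\tC x)+o(1)$ as $|x|\goto\infty$, uniformly in direction.

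\textbf{Finite Dirichlet problems.} Next I would fix one strictly spacelike $\phi$ with $q_1\le\phi<q_2$, $\phi(0)=q_1(0)$, whose sublevel sets $G_m=\phi^{-1}((-\infty,m))$ are smooth, uniformly convex, and exhaust $\mathbb{R}^n$ (e.g.\ a small mollification of $q_2$ against a suitable strictly convex function). Lemma \ref{lem5.1}, whose proof applies verbatim to constant boundary data, then produces a smooth $u_m\in C^{2,\alpha}(\overline{G_m})$ solving \eqref{eq5.10}. The key point for later is that $u_m$ is uniformly strictly spacelike: $u_m<m$ in $G_m$ by the absence of an interior maximum, so the outward normal derivative of $u_m$ on $\partial G_m$ is nonnegative and, by comparison with the affine maximal-hyperplane solution $\tC\,\nu\cdot(x-x_0)+m$ tangent to $\partial G_m$ at $x_0$, at most $\tC$; hence $|Du_m|\le\tC$ on $\partial G_m$, and then the maximum principle applied to $1/w$ via \eqref{eq2.72} (an elliptic equation with nonpositive zeroth-order part, so $1/w$ attains its maximum on $\partial G_m$) gives $|Du_m|\le\tC$ on all of $\overline{G_m}$. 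Comparing $u_m$ with the barriers $z_i(\cdot;\tC y)$ on $\partial G_m$, where $u_m=m=\phi$ and $q_1\le\phi<q_2$, yields $q_1\le u_m<q_2$ in $G_m$.

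\textbf{Uniform estimates and the limit.} Fix a compact $K\subset\mathbb{R}^n$. The sandwich $q_1\le u_m<q_2$ bounds $u_m$ on $K$, and $|Du_m|\le\tC$ makes the induced metric $g_{ij}=\delta_{ij}-\partial_i u_m\,\partial_j u_m$ uniformly comparable to the Euclidean one; so there are radii $r_1<r_2$, independent of $m$, with $K$ inside the intrinsic ball $B^{\Sigma_m}(0,r_1)$ and $\partial G_m$ outside $B^{\Sigma_m}(0,r_2)$ for all large $m$. I would then run the localized $|A|^2$-estimate from the proof of Lemma \ref{lem3.2} — the computation \eqref{eq3.7}--\eqref{eq3.9}, which needs only the uniform bound $0\le H=C-1/w\le C$ now in hand and the Ricci lower bound $\Ric\ge-H^2/4$ — on geodesic balls of fixed radius centered at points of $K$, obtaining a bound on $|A|^2$, hence on $|D^2u_m|$, over $K$ uniform in $m$. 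The equation becomes uniformly elliptic with bounded coefficients on $K$, so Schauder theory (differentiating the equation and bootstrapping) gives uniform $C^k$ bounds on $K$ for every $k$. A diagonal subsequence converges in $C^2_{\mathrm{loc}}$ to an entire strictly spacelike solution $u$ of \eqref{eq1.05} with $q_1\le u\le q_2$; since $q_1$ and $q_2$ share the asymptotics above, $u(x)\goto\tC|x|-\frac{n-1}{C^2}\log|x|+f(\tC x)$ as $|x|\goto\infty$.

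\textbf{Main obstacle.} The crux is the uniform-on-compacta a priori estimate of the last step: interior estimates for the spacelike prescribed-mean-curvature equation degenerate as $|Du|\to 1$, so the argument rests on (i) propagating the boundary gradient bound inward through the special algebraic identity \eqref{eq2.72}, which makes $1/w$ a subsolution of an elliptic equation with nonpositive zeroth-order term, and (ii) making the curvature estimate of Lemma \ref{lem3.2} effective on intrinsic balls whose radius is bounded below independently of $m$. A secondary technical point is arranging the exhausting domains $G_m$ to be genuinely smooth and uniformly convex while preserving $q_1\le\phi<q_2$; and one should also check carefully that the sup/inf of translates of $\psi$ really are sub/supersolutions with the stated precise asymptotics, $q_1\le q_2$ included (cf.\ \cite{T}).
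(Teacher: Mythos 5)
Your proposal reproduces the paper's own proof essentially step for step: extend $f$ homogeneously and trap it between the affine functions $p_i(\tC y)\cdot(\tC x-\tC y)$, build the barriers $z_i=f(\tC y)-p_i(\tC y)\cdot\tC y+\psi(\cdot+p_i(\tC y))$ from the radial soliton and set $q_1=\sup z_1$, $q_2=\inf z_2$, solve the Dirichlet problems \eqref{eq5.10} on the sublevel sets $G_m$ of a sandwiched strictly spacelike $\phi$, propagate the boundary gradient bound $|Du_m|\le\tC$ inward via \eqref{eq2.72}, and use the Lemma \ref{lem3.2} curvature estimate on intrinsic balls of uniformly bounded radius to extract a $C^2_{\mathrm{loc}}$ limit with the prescribed asymptotics. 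You correctly fill in the intent of the paper's intrinsic-distance inequalities ($\partial G_m$ outside a fixed intrinsic ball, $K$ inside), so the approach and all key ingredients match the paper's proof.
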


\section*{Acknowledgement}
The second author would like to thank Cornell University
for their hospitality during her visit in the spring 2014 while this project was
initiated. In particular, she would like to thank Professor Xiaodong Cao and Professor Laurent Saloff-Coste for
their help and support.

\bigskip

\end{document}